\newtheorem{theorem}{Theorem}[section]
\newtheorem*{theorem A}{Theorem A}
\newtheorem*{theorem B}{N\"olker's Theorem}
\theoremstyle{remark}
\newtheorem{remark}{Remark}[section]
\theoremstyle{remark}
\theoremstyle{definition}
\newtheorem{example}{Example}[section]
\numberwithin{equation}{section}
\def\({\left ( }
\def\){\right )}
\def\<{\left < }
\def\>{\right >}
\begin{document}
\title{ Affine translation surfaces in the isotropic 3-space}
\author{Muhittin Evren Aydin$^*$, Mahmut Ergut}
\address{$^*$ Department of Mathematics, Faculty of Science, Firat University, Elazig, 23200, Turkey}
\address{Department of Mathematics, Faculty of Science and Art, Namik Kemal University, Tekirdag, 59000, Turkey}
\email{meaydin@firat.edu.tr, mergut@nku.edu.tr}
\thanks{}
\subjclass[2000]{53A35, 53A40, 53B25.}
\keywords{Isotropic space, affine translation surface, Weingarten
surface.}

\begin{abstract}
The isotropic 3-space $\mathbb{I}^{3}$ is a real affine
3-space endowed with the metric $dx^{2}+dy^{2}.$ In this paper we describe Weingarten and linear Weingarten
affine translation surfaces in $\mathbb{I}^{3}$. Further we classify the affine translation
surfaces in $\mathbb{I}^{3}$ that satisfy certain equations in terms of the position vector and the Laplace operator.
\end{abstract}

\maketitle

\section{Introduction}

It is well-known that a surface is called \textit{translation surface }in a
Euclidean 3-space $\mathbb{R}^{3}$ if it is the graph of a function $z\left(
x,y\right) =f\left( x\right) +g\left( y\right) $ for the standart coordinate
system of $\mathbb{R}^{3}.$ One of the famous minimal surfaces of $\mathbb{R}%
^{3}$ is Scherk's minimal translation surface which is the graph of the function
\begin{equation*}
z\left( x,y\right) =\frac{1}{c}\log \left\vert \frac{\cos \left( cx\right) }{%
\cos \left( cy\right) }\right\vert ,\text{ }c\in 
\mathbb{R}
^{\ast }:=\mathbb{R-}\left\{ 0\right\} .
\end{equation*}%
In order for more generalizations of the translation surfaces to see in various ambient spaces we refer to \cite{4,5,7,12,16,19,20,24,26}.

In 2013, H. Liu and Y. Yu \cite{14} defined the \textit{affine translation
surfaces }in $\mathbb{R}^{3}$ as the graph of the function 
\begin{equation*}
z\left( x,y\right) =f\left( x\right) +g\left( y+ax\right) ,\text{ }a\in 
\mathbb{R}
^{\ast }
\end{equation*}%
and described the minimal affine translation surfaces which are given by%
\begin{equation*}
z\left( x,y\right) =\frac{1}{c}\log \left\vert \frac{\cos \left( c\sqrt{%
1+a^{2}}x\right) }{\cos \left( c\left[ y+ax\right] \right) }\right\vert ,%
\text{ }a,c\in 
\mathbb{R}
^{\ast }.
\end{equation*}%
These are called \textit{affine Scherk surface}. Then H. Liu and S.D. Jung 
\cite{15} classified the affine translation surfaces in $\mathbb{R}^{3}$ of arbitrary constant mean
curvature.

In the isotropic 3-space $\mathbb{I}^{3}$, there exist three different classes of
translation surfaces given by (see  \cite{18,25})%
\[
\left\{ 
\begin{array}{l}
z\left( x,y\right) =f\left( x\right) +g\left( y\right) , \\ 
y\left( x,z\right) =f\left( x\right) +g\left( z\right) , \\ 
x\left( y,z\right) =\frac{1}{2}\left( f\left( \frac{y+z-\pi }{2}\right)
+g\left( \frac{\pi -y+z}{2}\right) \right),%
\end{array}%
\right. 
\]%
where $x,y,z$ are the standart affine coordinates in $\mathbb{I}^{3}$. These surfaces are respectively called \textit{translation surfaces of Type 1,2,3} in $\mathbb{I}^{3}$. Such surfaces of constant isotropic
Gaussian and mean curvature were obtained in \cite{18} as well as Weingarten
ones.

The translation surfaces of Type 1 in $\mathbb{I}^{3}$ that satisfy the condition%
\begin{equation*}
\bigtriangleup ^{I,II}r_{i}=\lambda _{i}r_{i},\text{ }\lambda _{i}\in 
\mathbb{R}
,\text{ }i=1,2,3,
\end{equation*}%
were presented in \cite{13}, where $r_{i}$ is the coordinate function of
the position vector and $\bigtriangleup ^{I,II}$ the Laplace operator
with respect to the first and second fundamental forms, respectively. This condition is natural, being related to the
so-called \textit{submanifolds of finite type}, introduced by B.-Y. Chen in
the late 1970's (see \cite{8,9,11}). More details of translation surfaces in the isotropic spaces can be
found in \cite{2,3,6}.

In this paper, we investigate the affine translation surfaces of Type 1 in $\mathbb{I}%
^{3}$, i.e. the graphs of the function%
\begin{equation*}
z\left( x,y\right) =f\left( ax+by\right) +g\left( cx+dy\right) ,\text{ }%
ad-bc\neq 0
\end{equation*}%
and classify ones of Weinagarten type. Morever we describe the affine
translation surfaces of Type 1 that satisfy the condition $\bigtriangleup
^{I,II}r_{i}=\lambda _{i}r_{i}$.

\section{Preliminaries}

The isotropic 3-space $\mathbb{I}^{3}$ is a real affine space defined from the projective 3-space $P\left( 
\mathbb{R}
^{3}\right) $ with an absolute figure consisting of a plane $\omega$ and two complex-conjugate straight lines $f_{1},f_{2}$  in $%
\omega $ (see \cite{1,10,17}, \cite{21}-\cite{23}). Denote the projective coordinates by $\left(X_{0},X_{1},X_{2},X_{3}\right)$
 in $P\left( 
\mathbb{R}
^{3}\right) $. Then the absolute plane $\omega $
is given by $X_{0}=0$ and the absolute lines $f_{1},f_{2}$ by $%
X_{0}=X_{1}+iX_{2}=0,$ $X_{0}=X_{1}-iX_{2}=0.$ The intersection point $%
F(0:0:0:1)$ of these two lines is called the absolute point. The group of
motions of $\mathbb{I}^{3}$ is a six-parameter group given in the affine
coordinates $x=\frac{X_{1}}{X_{0}},$ $y=\frac{X_{2}}{X_{0}},$ $z=%
\frac{X_{3}}{X_{0}}$ by

\begin{equation*}
\left( x,y,z\right) \longmapsto \left( x^{\prime },y^{\prime },z^{\prime
}\right) :\left\{ 
\begin{array}{l}
x^{\prime }=a_{1}+x\cos \phi -y\sin \phi , \\ 
y^{\prime }=a_{2}+x\sin \phi +y\cos \phi , \\ 
z^{\prime }=a_{3}+a_{4}x+a_{5}y+z,%
\end{array}%
\right.
\end{equation*}%
where $a_{1},...,a_{5},\phi \in 
\mathbb{R}
.$

The metric of $\mathbb{I}^{3}$ is induced by the absolute figure, i.e. $ds^{2}=dx^{2}+dy^{2}.$ The lines in $%
z-$direction are called \textit{isotropic lines}. The planes
containing an isotropic line are called \textit{isotropic planes}. Other
planes are \textit{non-isotropic}.

Let $M$ be a surface immersed in $\mathbb{I}^{3}$. We call the surface $M$ \textit{admissible} if it has no
isotropic tangent planes. Such a surface can get the form%
\begin{equation}
r:D\subseteq \mathbb{R}^{2}\longrightarrow \mathbb{I}^{3}:\text{ }\left(
x,y\right) \longmapsto \left( r_{1}\left( x,y\right)
,r_{2}\left( x,y\right) ,r_{3}\left( x,y\right) \right). 
\notag
\end{equation}%

The components $E,F,G$ of the first fundamental form $I$ of $M$ can be calculated via the metric induced from $\mathbb{I}^{3}$. 

Denote $\bigtriangleup^I$ the Laplace operator of $M$ with respect to $I$. Then it is
defined as 
\begin{equation}
\bigtriangleup \phi =\frac{1}{\sqrt{\left\vert W\right\vert }}\left\{ 
\frac{\partial }{\partial x}\left( \frac{G\phi _{x}-F\phi _{y}}{\sqrt{%
\left\vert W\right\vert }}\right) -\frac{\partial }{\partial y}\left( \frac{%
F\phi _{x}-E\phi _{y}}{\sqrt{\left\vert W\right\vert }}\right) \right\},
\tag{2.1}
\end{equation}
where $\phi $ is a smooth function on $M$ and $W=EG-F^2$.

The unit normal vector field of $M^{2}$ is completely isotropic, i.e. $%
\left( 0,0,1\right) $. Morever, the components of the second fundamental form $II$
are%
\begin{equation}
L=\frac{\det \left( r_{xx},r_{x},r_{y}\right) }{\sqrt{%
EG-F^2}},\text{ }M=\frac{\det \left( r_{xy},r_{x},r_{y}\right) }{\sqrt{%
EG-F^2}},\text{ }N=\frac{\det \left( r_{yy},r_{x},r_{y}\right) }{\sqrt{%
EG-F^2 }},  \tag{2.2}
\end{equation}%
where $r_{xy}=\frac{\partial ^{2}r}{\partial x\partial y},$ etc.

The \textit{relative curvature} (so-called the \textit{isotropic
curvature} or \textit{isotropic Gaussian curvature}) and the \textit{%
isotropic mean curvature} are respectively defined by%
\begin{equation}
K=\frac{LN-M^2 }{EG-F^2},\text{ }H=%
\frac{EN-2FM+LG}{2(EG-F^2) }.
\tag{2.3}
\end{equation}%

Assume that nowhere $M$ has parabolic points, i.e. $K\neq 0.$ Then the Laplace
operator with respect to $II$ is given by%
\begin{equation}
\bigtriangleup ^{II}\phi =-\frac{1}{\sqrt{\left\vert w\right\vert }}\left\{ 
\frac{\partial }{\partial x}\left( \frac{N\phi _{x}-M\phi _{y}}{\sqrt{%
\left\vert w\right\vert }}\right) -\frac{\partial }{\partial y}\left( \frac{%
M\phi_{x}-L\phi _{y}}{\sqrt{\left\vert w\right\vert }}\right) \right\} 
\tag{2.4}
\end{equation}%
for a smooth function $\phi $ on $M$ and $w=LN-M^2$.

In particular, if $M$ is a graph surface in $\mathbb{I}^{3}$ of a smooth function $z(x,y)$ then the metric on $M$ induced from $\mathbb{I}^{3}$ is given by $%
dx^{2}+dy^{2}.$ Thus its Laplacian turns to%
\begin{equation}
\bigtriangleup ^{I}=\frac{\partial ^{2}}{\partial x^{2}}+\frac{\partial ^{2}%
}{\partial y^{2}}.  \tag{2.5}
\end{equation}

The matrix of second fundamental form $II$ of $M$ corresponds to the Hessian
matrix $\mathcal{H}\left( z\right) $,\ i.e., 
\begin{equation*}
\begin{pmatrix}
L & M \\ 
M & N%
\end{pmatrix}%
=%
\begin{pmatrix}
z_{xx} & z_{xy} \\ 
z_{xy} & z_{yy}%
\end{pmatrix}%
.
\end{equation*}%
Accordingly, the formulas (2.3) reduce to%
\begin{equation}
K=\det \left( \mathcal{H}\left( z\right) \right) ,\text{ }H=\frac{%
trace\left( \mathcal{H}\left( z\right) \right) }{2}.  \tag{2.6}
\end{equation}

\section{Weingarten affine translation surfaces}

Let $M$ be the graph surface in $\mathbb{I}^{3}$ of the function $z\left( x,y\right) =f\left( u\right) +g\left( v\right)$, where
\begin{equation}
u=ax+by, \text{  } v=cx+dy.  \tag{3.1}
\end{equation}%
If $ad-bc\neq 0$, we call the surface $M$ \textit{affine translation surface of Type 1} in $\mathbb{I}^{3}$ and the pair $\left( u,v\right) $
 \textit{affine parameter coordinates}. 

In the particular case $a=d=1$ and $b=c=0$ (or $a=d=0$ and $b=c=1$), such a surface reduces to
the translation surface of Type 1 in $\mathbb{I}^{3}$. Let us fix some notations to use remaining part:
\begin{equation*}
\frac{\partial f}{\partial x}=a\frac{df}{du}=af^{\prime },\text{ }\frac{%
\partial f}{\partial y}=bf^{\prime },\text{ }\frac{\partial g}{\partial x}=c%
\frac{dg}{dv}=cg^{\prime },\text{ }\frac{\partial g}{\partial y}=dg^{\prime
},
\end{equation*}
and so on. By $\left( 2.6\right) ,$ the relative curvature $K$ and the isotropic mean
curvature $H$ of $M$ turn to%
\begin{equation}
K=\left( ad-bc\right) ^{2}f^{\prime \prime }g^{\prime \prime }\text{ and }%
2H=\left( a^{2}+b^{2}\right) f^{\prime \prime }+\left( c^{2}+d^{2}\right)
g^{\prime \prime }.  \tag{3.2}
\end{equation}%

Now we can state the following result to describe the Weingarten affine
translation surfaces of Type 1 in $\mathbb{I}^{3}$ that satisfy the condition
\begin{equation}
K_{x}H_{y}-K_{y}H_{x}=0,  \tag{3.3}
\end{equation}
where the subscript denotes the partial derivative.

\begin{theorem} Let $M$ be a Weingarten affine translation surface of Type 1 in $\mathbb{I}%
^{3}.$ Then one of the following occurs:

(i) $M$ is a quadric surface given by%
\begin{equation*}
z\left( x,y\right) =c_{1}u^{2}+\frac{c_{1}\left(
a^{2}+b^{2}\right) }{\left( c^{2}+d^{2}\right) }v^{2}+c_{2}u+c_{3}v+c_{4},\text{ }c_{1},...,c_{4}\in 
\mathbb{R}
;
\end{equation*}

(ii)$\ M$ is of the form either 
\begin{equation*}
z\left( x,y\right) =f\left( u\right) +c_{1}v^{2}+c_{2}v+c_{3},\text{ 
} f^{\prime \prime \prime} \neq 0,  \text{ 
}c_{1},c_{2},c_{3}\in 
\mathbb{R}%
\end{equation*}%
or%
\begin{equation*}
z\left( x,y\right) =g\left( v\right) +c_{1}u^{2}+c_{2}u+c_{3}, \text{ 
} g^{\prime \prime \prime} \neq 0, \text{ 
}c_{1},c_{2},c _{3}\in 
\mathbb{R}
,
\end{equation*}%
where $\left( u,v\right) $ is the affine parameter coordinates given by (3.1).
\end{theorem}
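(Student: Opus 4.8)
The plan is to unwind the Weingarten condition (3.3) using the explicit formulas (3.2) for $K$ and $H$ in affine parameter coordinates, and then to analyze the resulting ODE in $f$ and $g$ by a case split on whether $f'''$ and $g'''$ vanish. First I would compute the four partial derivatives appearing in (3.3). Since $K=(ad-bc)^2 f''g''$ and $2H=(a^2+b^2)f''+(c^2+d^2)g''$, and since $u=ax+by$, $v=cx+dy$, the chain rule gives $K_x=(ad-bc)^2(a f'''g''+c f''g''')$, $K_y=(ad-bc)^2(b f'''g''+d f''g''')$, and similarly $2H_x=(a^2+b^2)af'''+(c^2+d^2)cg'''$, $2H_y=(a^2+b^2)bf'''+(c^2+d^2)dg'''$. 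Substituting these into $K_xH_y-K_yH_x=0$ and factoring out the common nonzero factor $(ad-bc)^2$, the Jacobian-type combination collapses: the cross terms should organize so that every surviving term carries the antisymmetric factor $(ad-bc)$. The key algebraic step is to verify that after simplification the condition reduces to
\begin{equation*}
(ad-bc)\left[(a^2+b^2)f''f'''g''-(c^2+d^2)f''g''g'''\right]=0,
\end{equation*}
i.e., since $ad-bc\neq 0$, to
\begin{equation*}
f''g''\left[(a^2+b^2)f'''g''-(c^2+d^2)f''g'''\right]=0.
\end{equation*}

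Next I would split into cases according to which factor vanishes. If $f''\equiv 0$ on an open set, then $f$ is affine in $u$, so $f(u)=c_2u+$const and the surface is $z=g(v)+(\text{linear in }u)$; combining with a quadratic completion in the remaining variable and relabeling constants yields one of the forms in (ii) (here one must record the subcase $g'''\neq 0$ to land in case (ii) rather than the quadric). The symmetric case $g''\equiv 0$ gives the other form in (ii). The substantive case is $f''g''\neq 0$, where we must solve
\begin{equation*}
(a^2+b^2)\frac{f'''}{f''}=(c^2+d^2)\frac{g'''}{g''},
\end{equation*}
after dividing by $f''g''\neq 0$. The left side depends only on $u$ and the right side only on $v$, so both equal a common constant $\lambda$. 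Integrating $(a^2+b^2)f'''/f''=\lambda$ gives $(a^2+b^2)\log|f''|=\lambda u+$const, and likewise for $g$.

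The main obstacle is the subcase analysis within $f''g''\neq 0$, specifically whether $\lambda=0$ or $\lambda\neq 0$. If $\lambda=0$ then $f'''=g'''=0$, forcing $f''$ and $g''$ to be nonzero constants; writing $f''=2c_1$ and then matching $g''$ through the relation $(a^2+b^2)f''=(c^2+d^2)g''$ (which is exactly what $\lambda=0$ in the pre-integrated form forces, once one tracks the constants) produces the quadric in case (i), with the precise coefficient $c_1(a^2+b^2)/(c^2+d^2)$ on $v^2$ matching the stated form. The remaining task is to rule out $\lambda\neq 0$: integrating would give $f''=A\exp(\lambda u/(a^2+b^2))$ and $g''=B\exp(\lambda v/(c^2+d^2))$, and I expect that feeding these back into the original Weingarten relation (or into the constant-curvature constraints implicit in being genuinely Weingarten, not merely satisfying (3.3)) produces a contradiction unless the exponential degenerates to a constant, so that $\lambda\neq 0$ cannot occur and only cases (i) and (ii) survive. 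Verifying this last exclusion cleanly — showing the exponential solutions do not yield a new family beyond (i) and (ii) — is the step I expect to require the most care, since it is where the interplay between the two ODEs must be exploited rather than treating $f$ and $g$ independently.
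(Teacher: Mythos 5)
There is a genuine gap, and it originates in the ``key algebraic step.'' Redoing the computation: with $K=(ad-bc)^2f''g''$ and $2H=(a^2+b^2)f''+(c^2+d^2)g''$, your formulas for $K_x,K_y,H_x,H_y$ are correct, but the combination $K_x(2H_y)-K_y(2H_x)$ simplifies to
\begin{equation*}
(ad-bc)^2\,(ad-bc)\,f'''g'''\left[(c^2+d^2)g''-(a^2+b^2)f''\right],
\end{equation*}
i.e.\ the product $f'''g'''$ factors out (the $(f''')^2$ and $(g''')^2$ terms cancel identically). So the Weingarten condition reduces to $f'''g'''\left[(a^2+b^2)f''-(c^2+d^2)g''\right]=0$, which is the paper's equation (3.4). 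Your claimed reduction is not this; moreover your two displayed versions of it are mutually inconsistent ($(a^2+b^2)f''f'''g''-(c^2+d^2)f''g''g'''$ factors as $f''g''[(a^2+b^2)f'''-(c^2+d^2)g''']$, not as $f''g''[(a^2+b^2)f'''g''-(c^2+d^2)f''g''']$), and neither matches the correct identity.

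This error is not cosmetic: your entire ``substantive case'' rests on it. From the correct equation the case split is immediate: either $f'''g'''=0$, giving case (ii) (or a quadric if both vanish), or $(a^2+b^2)f''=(c^2+d^2)g''$, and since the left side depends only on $u$ and the right only on $v$, both are equal to a constant $2c_1(a^2+b^2)$, which yields the quadric (i) with exactly the stated coefficient on $v^2$. No separation constant $\lambda$ and no exponential family ever arise. By contrast, under your (incorrect) reduced equation the exponential solutions $f''=A\exp(\lambda u/(a^2+b^2))$, $g''=B\exp(\lambda v/(c^2+d^2))$ genuinely do satisfy it and are not of the forms (i) or (ii), which is precisely why you could not see how to exclude them --- they cannot be excluded from that equation. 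The step you flagged as ``requiring the most care'' is therefore not a hard step to be filled in later but a symptom that the preceding algebra must be corrected; once it is, the proof closes exactly as in the paper.
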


\begin{remark}
We point out that a \textit{quadric surface} in $\mathbb{I}^{3}$ is the set of the points satisfying an equation of the second degree.
\end{remark}

\begin{proof} It follows from $\left( 3.2\right) $ and $\left( 3.3\right) $ that

\begin{equation}
\left[ \left( a^{2}+b^{2}\right) f^{\prime \prime }-\left(
c^{2}+d^{2}\right) g^{\prime \prime }\right] f^{\prime \prime \prime
}g^{\prime \prime \prime }=0.  \tag{3.4}
\end{equation}%
To solve $\left( 3.4\right),$ we have several cases:

\textbf{Case (a). } $\left( a^{2}+b^{2}\right) f^{\prime \prime }=\left( c^{2}+d^{2}\right)g^{\prime \prime }$. Then we derive
\begin{equation}
z\left( x,y\right) =c_{1}u^{2}+\frac{c_{1}\left(
a^{2}+b^{2}\right) }{\left( c^{2}+d^{2}\right) }v^{2}+c_{2}u+c
_{3}v+c_{4},\text{ }c_{1},...,c _{4}\in 
\mathbb{R}
, \notag
\end{equation}%
which gives the statement (i) of the theorem.

\textbf{Case (b). }$\left( a^{2}+b^{2}\right) f^{\prime \prime } \neq \left( c^{2}+d^{2}\right)g^{\prime \prime }$.  Then, by (3.4), the surface has the form either %
\begin{equation}
z\left( x,y\right) =g\left( v\right) +c_{1}u^{2}+c_{2}u+c_{3},\text{ 
} g^{\prime \prime \prime} \neq 0
\notag
\end{equation}%
or%
\begin{equation}
z\left( x,y\right) =f\left( u\right) +c_{4}v^{2}+c_{5}v+c_{6},\text{ 
} f^{\prime \prime \prime} \neq 0, \text{ 
} c_{1},...,c_{6}\in 
\mathbb{R}
. \notag
\end{equation}%
This implies the second statement of the theorem.
Therefore the proof is completed.
\end{proof}

Now we intend to find the linear Weingarten affine translation surfaces of Type 1
in $\mathbb{I}^{3}$ that satisfy%
\begin{equation}
\alpha K+\beta H=\gamma ,\text{ } \alpha ,\beta ,\gamma \in \mathbb{R},\text{ } \left( \alpha ,\beta ,\gamma \right) \neq
\left( 0,0,0\right) .  \tag{3.5}
\end{equation}%
Without lose of generality, we may assume $\alpha \neq 0$ in $\left(
3.5\right) $ and thus it can be rewritten as
\begin{equation}
K+2m_{0}H=n_{0},\text{ }2m_{0}=\frac{\beta }{\alpha },\text{ }n_{0}=\frac{%
\gamma }{\alpha }.  \tag{3.6}
\end{equation}%
Hence the following result can be given.

\begin{theorem} Let $M$ be a linear Weingarten affine translation surface of Type 1 in $%
\mathbb{I}^{3}$ that satisfies $\left( 3.6\right) $. Then we have:

(i) $M$ is a quadric surface given by 
\begin{equation*}
z\left( x,y\right) =c_{1}u^{2}+c_{2}v^{2}+c
_{3}u+c_{4}v+c _{5},\text{ }c_{1},...,c_{5}\in 
\mathbb{R}
;
\end{equation*}

(ii) $M$ is of the form either 
\begin{equation*}
z\left( x,y\right) =f\left( u\right) -\frac{m_{0}\left( a^{2}+b^{2}\right) }{%
2\left( ad-bc\right) ^{2}}v^{2}+c_{1}v+c_{2}, \text{ 
} f^{\prime \prime \prime} \neq 0,\text{ }c_{1},c_{2}\in 
\mathbb{R}%
\end{equation*}%
or%
\begin{equation*}
z\left( x,y\right) =g\left( v\right) -\frac{m_{0}\left( c^{2}+d^{2}\right) }{%
2\left( ad-bc\right) ^{2}}u^{2}+c_{1}u+c_{2},\text{ 
} g^{\prime \prime \prime} \neq 0, \text{ }c_{1},c_{2}\in 
\mathbb{R}
,
\end{equation*}

where $\left( u,v\right) $ is the affine parameter coordinates given by (3.1).
\end{theorem}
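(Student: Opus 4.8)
The plan is to convert the Weingarten relation (3.6) into a single separable functional equation and then extract the classification by one differentiation. Substituting $K$ and $2H$ from (3.2) into $K+2m_0H=n_0$ gives
\[
(ad-bc)^2 f''g'' + m_0(a^2+b^2)f'' + m_0(c^2+d^2)g'' = n_0 .
\]
Since $ad-bc\neq 0$, the linear map $(x,y)\mapsto(u,v)$ is invertible, so $u$ and $v$ may be regarded as independent variables, with $f''$ a function of $u$ alone and $g''$ a function of $v$ alone.

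The key step is to differentiate this identity with respect to $u$. The pure-$g$ term drops out and one obtains
\[
f''' \big[ (ad-bc)^2 g'' + m_0(a^2+b^2) \big] = 0 .
\]
As the bracketed factor depends only on $v$ while $f'''$ depends only on $u$, the product can vanish identically only if $f'''\equiv 0$, or else the bracket vanishes identically, which forces $g''$ to be constant, i.e. $g'''\equiv 0$. Thus, up to interchanging the roles of $u$ and $v$, I may assume $f'''\equiv 0$, so that $f$ is a quadratic polynomial in $u$.

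With $f''$ constant I would substitute back and read the master equation as a linear equation in $g''$, whose coefficient is $(ad-bc)^2 f'' + m_0(c^2+d^2)$. If this coefficient is nonzero, $g''$ is likewise forced to be constant, both $f$ and $g$ are quadratic, and a double integration yields the quadric of statement (i). If the coefficient vanishes, then $f''$ is pinned to $-m_0(c^2+d^2)/(ad-bc)^2$, the surviving terms impose a compatibility condition on $n_0$, and $g$ remains free subject only to $g'''\neq 0$; integrating $f$ twice then reproduces the second form in statement (ii). The symmetric branch $g'''\equiv 0$ gives the first form.

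The genuinely delicate bookkeeping lies in the two integrations from $f''$ and $g''$ back to $f$ and $g$, which introduce the linear and constant terms $c_i$, and in checking that the special coefficients $-m_0(a^2+b^2)/2(ad-bc)^2$ and $-m_0(c^2+d^2)/2(ad-bc)^2$ emerge correctly in the vanishing-coefficient subcases. I expect the main obstacle to be purely organizational---keeping the ``coefficient zero'' versus ``coefficient nonzero'' subcases cleanly separated---rather than analytic, since once the separation-of-variables dichotomy is established each branch is a short computation.
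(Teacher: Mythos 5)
Your proposal is correct and follows essentially the same route as the paper: substitute (3.2) into (3.6) to get the master equation (3.7) and differentiate to force $f'''g'''=0$, then split into the quadric case and the two mixed cases. Your variant of differentiating only in $u$ and separating variables is a minor refinement that has the advantage of producing the explicit coefficients $-m_0(a^2+b^2)/(2(ad-bc)^2)$ and $-m_0(c^2+d^2)/(2(ad-bc)^2)$ directly, which the paper's terser argument leaves implicit.
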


\begin{proof} Substituting $\left( 3.2\right) $ in $\left( 3.6\right) $ gives%
\begin{equation}
\left( ad-bc\right) ^{2}f^{\prime \prime }g^{\prime \prime }+m_{0}\left(
a^{2}+b^{2}\right) f^{\prime \prime }+m_{0}\left( c^{2}+d^{2}\right)
g^{\prime \prime }=n_{0}.  \tag{3.7}
\end{equation}%
After taking partial derivative of $\left( 3.7\right) $ with respect to $u$
and $v,$ we deduce $f^{\prime \prime \prime }g^{\prime \prime \prime }=0$.
If both $f^{\prime \prime \prime }$ and $g^{\prime \prime \prime }$ are zero
then we easily obtain the first statement of the theorem. Otherwise, we have
the second statement of the theorem. This proves the theorem.
\end{proof}

\begin{example} Consider the affine translation surface of Type 1 in $\mathbb{I}^{3}$ with%
\begin{equation*}
z\left( x,y\right) =\cos \left( x-y\right) +\left( x+y\right) ^{2},\text{ }-%
\frac{\pi }{6}\leq x,y\leq \frac{\pi }{6}.
\end{equation*}%
This surface plotted as in Fig. 1 satisfies the conditions to be Weingarten and linear Weingarten.
\end{example}

\section{Affine translation surfaces satisfying $\bigtriangleup ^{I,II}r_{i}=%
\protect\lambda _{i}r_{i}$}
This section is devoted to classify the affine translation surfaces of Type 1 in $%
\mathbb{I}^{3}$ that satisfy the conditions $\bigtriangleup ^{I,II}r_{i}=\lambda _{i}r_{i}$%
, $\lambda _{i}\in 
\mathbb{R}
.$ For this, we get a local parameterization on such a surface as follows 
\begin{equation}
\left. 
\begin{array}{r}
r\left( x,y\right) =\left( r_{1}\left( x,y\right) ,r_{2}\left( x,y\right)
,r_{3}\left( x,y\right) \right)  \\ 
=\left( x,y,f\left( ax+by\right) +g\left( cx+dy\right) \right) .%
\end{array}%
\right.   \tag{4.1}
\end{equation}%
Thus we first give the following result.

\begin{theorem} Let $M$ be an affine translation surface of Type 1 in $\mathbb{I}%
^{3} $ that satisfies $\bigtriangleup ^{I}r_{i}=\lambda _{i}r_{i}.$ Then it is
congruent to one of the following surfaces:

(i) $\left( \lambda _{1},\lambda _{2},\lambda _{3}\right)=(0,0,0)$
\begin{equation*}
z\left( x,y\right) =c_{1}u^{2}-\frac{c_{1}\left(
a^{2}+b^{2}\right) }{\left( c^{2}+d^{2}\right) }v^{2}+c _{3}u+c_{4}v+c_{5};
\end{equation*}

(ii) $\left( \lambda _{1},\lambda _{2},\lambda _{3}\right)=(0,0,\lambda>0)$
\begin{equation*}
z\left( x,y\right) =c_{1}e^{\sqrt{\frac{\lambda }{a^{2}+b^{2}}}%
u}+c_{2}e^{-\sqrt{\frac{\lambda }{a^{2}+b^{2}}}u}+c_{3}e^{%
\sqrt{\frac{\lambda }{c^{2}+d^{2}}}v}+c_{4}e^{-\sqrt{\frac{\lambda }{%
c^{2}+d^{2}}}v} ;
\end{equation*}

(iii) $\left( \lambda _{1},\lambda _{2},\lambda _{3}\right)=(0,0,\lambda<0)$
\begin{eqnarray*}
z\left( x,y\right) &=&c_{1}\cos \left( \sqrt{\tfrac{-\lambda }{%
a^{2}+b^{2}}}u\right) +c_{2}\sin \left( \sqrt{\tfrac{-\lambda }{%
a^{2}+b^{2}}}u\right) +c_{3}\cos \left( \sqrt{\tfrac{-\lambda }{c^{2}+d^{2}}}v\right)\\
&&+c_{4}\sin \left( \sqrt{\tfrac{-\lambda }{c^{2}+d^{2}}}v\right) ,
\end{eqnarray*}

where $\left( u,v\right) $ is the affine parameter coordinates given by (3.1) and $c
_{1},...,c_{5}\in 
\mathbb{R}
$.
\end{theorem}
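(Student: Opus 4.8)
The plan is to reduce the PDE condition $\bigtriangleup^{I} r_i = \lambda_i r_i$ to a system of ordinary differential equations in the single-variable functions $f$ and $g$. Since the surface is given as a graph by the parameterization (4.1), formula (2.5) tells us that the Laplacian with respect to the first fundamental form is simply $\bigtriangleup^{I} = \partial_{xx} + \partial_{yy}$. I would therefore apply this operator componentwise to $r_1 = x$, $r_2 = y$, and $r_3 = z(x,y) = f(u) + g(v)$.

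First I would handle the two linear coordinate functions. Since $\bigtriangleup^{I} x = 0$ and $\bigtriangleup^{I} y = 0$, the equations $\bigtriangleup^{I} r_1 = \lambda_1 r_1$ and $\bigtriangleup^{I} r_2 = \lambda_2 r_2$ force $\lambda_1 x = 0$ and $\lambda_2 y = 0$ identically on the domain, whence $\lambda_1 = \lambda_2 = 0$. This explains why in every case of the statement the first two eigenvalues vanish, and it isolates the genuine content into the single equation on $r_3$.

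The main computation is the third equation. Using the chain-rule conventions fixed in Section 3 (so that $\partial_{xx} f = a^2 f''$, $\partial_{yy} f = b^2 f''$, and similarly $\partial_{xx} g = c^2 g''$, $\partial_{yy} g = d^2 g''$), I would compute
\begin{equation*}
\bigtriangleup^{I} r_3 = (a^2+b^2) f''(u) + (c^2+d^2) g''(v).
\end{equation*}
Setting this equal to $\lambda_3\bigl(f(u)+g(v)\bigr)$ and separating the variables $u$ and $v$ (which are independent because $ad-bc \neq 0$), I obtain the decoupled pair
\begin{equation*}
(a^2+b^2) f''(u) = \lambda_3 f(u) + \kappa, \qquad (c^2+d^2) g''(v) = \lambda_3 g(v) - \kappa,
\end{equation*}
for a separation constant $\kappa \in \mathbb{R}$. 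Then I would split into the three cases $\lambda_3 = 0$, $\lambda_3 > 0$, and $\lambda_3 < 0$ and solve these second-order linear ODEs with constant coefficients. The case $\lambda_3 = 0$ integrates to quadratic polynomials in $u$ and $v$ (yielding statement (i), where the separation constant forces the coefficient relation between the $u^2$ and $v^2$ terms); $\lambda_3 > 0$ gives real exponentials with arguments $\sqrt{\lambda/(a^2+b^2)}\,u$ and $\sqrt{\lambda/(c^2+d^2)}\,v$ (statement (ii)); and $\lambda_3 < 0$ gives trigonometric solutions (statement (iii)).

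The step I expect to be the main obstacle is bookkeeping rather than conceptual: correctly tracking the separation constant $\kappa$ and the inhomogeneous terms so that the particular solutions are absorbed into the constants $c_i$ and the stated coefficient relations emerge cleanly, and verifying that the independence of $u$ and $v$ (guaranteed by $ad - bc \neq 0$) legitimately permits the separation of variables. Finally I would invoke the motion group of $\mathbb{I}^3$ described in Section 2 to absorb residual additive and linear terms, thereby justifying the phrase ``congruent to'' in the statement.
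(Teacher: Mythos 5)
Your proposal is correct and follows essentially the same route as the paper: both reduce $\bigtriangleup^{I}r_i=\lambda_i r_i$ via (2.5) to $\lambda_1=\lambda_2=0$ and the single equation $(a^2+b^2)f''+(c^2+d^2)g''=\lambda_3(f+g)$, then separate variables with a constant (your $\kappa$ is the paper's $\mu$ in (4.5)) and solve the resulting constant-coefficient ODEs in the three cases $\lambda_3=0$, $\lambda_3>0$, $\lambda_3<0$. The only cosmetic difference is that the paper notes the particular solutions $\pm\mu/\lambda$ cancel in the sum $f+g$ rather than appealing to the motion group.
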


\begin{proof} It is easy to compute from $\left( 2.5\right) $ and $\left(
4.1\right) $ that%
\begin{equation}
\bigtriangleup ^{I}r_{1}=\bigtriangleup ^{I}r_{2}=0  \tag{4.2}
\end{equation}%
and%
\begin{equation}
\bigtriangleup ^{I}r_{3}=\left( a^{2}+b^{2}\right) f^{\prime \prime }+\left(
c^{2}+d^{2}\right) g^{\prime \prime }.  \tag{4.3}
\end{equation}%
Assuming $\bigtriangleup ^{I}r_{i}=\lambda _{i}r_{i}$, $i=1,2,3$, in $\left( 4.2\right) $
and $\left( 4.3\right) $ yields $\lambda _{1}=\lambda _{2}=0$ and%
\begin{equation}
\left( a^{2}+b^{2}\right) f^{\prime \prime }+\left( c^{2}+d^{2}\right)
g^{\prime \prime }=\lambda \left( f+g\right) ,\text{ }\lambda _{3}=\lambda .
\tag{4.4}
\end{equation}%
If $\lambda =0$ in $\left( 4.4\right) ,$ then we derive%
\begin{equation*}
f\left( u\right) =c_{1}u^{2}+c _{2}u+c _{3}
\end{equation*}%
and 
\begin{equation*}
g\left( v\right) =-\frac{c_{1}\left( a^{2}+b^{2}\right) }{\left(
c^{2}+d^{2}\right) }v^{2}+c _{4}v+c_{5},\text{ }c
_{1},...,c_{5}\in 
\mathbb{R}
,
\end{equation*}%
which proves the statement (i) of the theorem.

If $\lambda \neq 0$ then $\left( 4.4\right) $ can be rewritten as%
\begin{equation}
\left( a^{2}+b^{2}\right) f^{\prime \prime }-\lambda f=\mu =-\left(
c^{2}+d^{2}\right) g^{\prime \prime }+\lambda g,\text{ }\mu \in 
\mathbb{R}
.  \tag{4.5}
\end{equation}
In the case $\lambda >0,$ by solving $\left( 4.5\right) $ we obtain%
\begin{equation}
\left\{ 
\begin{array}{l}
f\left( u\right) =c_{1}\exp \left( \sqrt{\frac{\lambda }{a^{2}+b^{2}}}%
u\right) +c_{2}\exp \left( -\sqrt{\frac{\lambda }{a^{2}+b^{2}}}%
u\right) +\frac{\mu }{\lambda }, \\ 
g\left( v\right) =c_{3}\exp \left( \sqrt{\frac{\lambda }{c^{2}+d^{2}}}%
v\right) +c_{4}\exp \left( -\sqrt{\frac{\lambda }{c^{2}+d^{2}}}%
v\right) -\frac{\mu }{\lambda },\text{ }%
\end{array}%
\right.  \notag
\end{equation}%
where $c_{1},...,c_{4}\in 
\mathbb{R}
.$ This gives the statement (ii) of the theorem.

Otherwise, i.e., $\lambda <0,$ then we derive%
\begin{equation}
\left\{ 
\begin{array}{c}
f\left( u\right) =c_{1}\cos \left( \sqrt{\tfrac{-\lambda }{a^{2}+b^{2}%
}}u\right) +c_{2}\sin \left( \sqrt{\tfrac{-\lambda }{a^{2}+b^{2}}}%
u\right) +\frac{\mu }{\lambda }, \\ 
g\left( v\right) =c_{3}\cos \left( \sqrt{\tfrac{-\lambda }{c^{2}+d^{2}%
}}v\right) +c_{4}\sin \left( \sqrt{\tfrac{-\lambda }{c^{2}+d^{2}}}%
v\right) -\frac{\mu }{\lambda }%
\end{array}%
\right. \notag
\end{equation}%
for $c_{1},...,c_{4}\in 
\mathbb{R}
.$ This completes the proof.
\end{proof}

\begin{example} Take the affine translation surface of Type 1 in $\mathbb{I}^{3}$ with%
\begin{equation*}
z\left( x,y\right) =\cos \left( x+y\right) +\sin \left( x-y\right) ,\text{ }%
-\pi \leq x,y\leq \pi.
\end{equation*}%
Then it satisfies $\bigtriangleup ^{I}r_{i}=\lambda _{i}r_{i}$ for $%
\lambda _{1}=\lambda _{2}=0$, $\lambda _{3}=-2$ and can be drawn as in
Fig. 2.%
\end{example}

Next, we consider the affine translation surface of Type 1 in $\mathbb{I}^{3}$
that satisfies $\bigtriangleup ^{II}r_{i}=\lambda _{i}r_{i}$, $\lambda _{i}\in 
\mathbb{R}
.$ Then its Laplace operator with respect to the second fundamental form $II$  has the form%
\begin{equation}
\begin{array}{c}
\bigtriangleup ^{II}\phi =\frac{\left( f^{\prime \prime }g^{\prime \prime
}\right) ^{-2}}{2\left( ad-bc\right) }\left[ \left( -b\phi _{x}+a\phi
_{y}\right) \left( f^{\prime \prime }\right) ^{2}g^{\prime \prime \prime
}+\left( d\phi _{x}-c\phi _{y}\right) f^{\prime \prime \prime }\left(
g^{\prime \prime }\right) ^{2}\right] \\ 
+\frac{\left( f^{\prime \prime }g^{\prime \prime }\right) ^{-1}}{\left(
ad-bc\right) ^{2}}\left[ \left( 2ab\phi _{xy}-b^{2}\phi _{xx}-a^{2}\phi
_{yy}\right) f^{\prime \prime }+\left( 2cd\phi _{xy}-d^{2}\phi
_{xx}-c^{2}\phi _{yy}\right)g^{\prime \prime } \right]%
\end{array}
\tag{4.6}
\end{equation}%
for a smooth function $\phi $ and $f^{\prime \prime }g^{\prime \prime
}\neq 0.$ Hence we have the following result.

\begin{theorem} Let $M$ be an affine translation surface of Type 1 in\textit{\ }$%
\mathbb{I}^{3}$ that satisfies\textit{\ }$\bigtriangleup ^{II}r_{i}=\lambda
_{i}r_{i}.$ Then it is congruent to one of the following surfaces:

(i) $\left( \lambda _{1} \neq 0,\lambda _{2} \neq 0,0\right) $
\[
z\left( x,y\right) =\ln \left( x^{\frac{1}{\lambda _{1}}}y^{\frac{1}{\lambda
_{2}}}\right) +c_{1},\text{ } c_{1}\in 
\mathbb{R}
;
\]%

(ii) $\left( \lambda \neq 0,\lambda ,0\right) $
\[
z\left( x,y\right) = \ln \left(\left( uv\right)^{\frac{1}{\lambda }}\right)
+c_{1},\text{ } c_{1}\in 
\mathbb{R}
, 
\]%
where $\left( u,v\right) $ is the affine parameter coordinates given by $%
\left( 3.1\right) $.
\end{theorem}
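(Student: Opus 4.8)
The plan is to apply the operator (4.6) to the three coordinate functions of the parametrization (4.1) and then extract the constraints imposed by $\bigtriangleup^{II}r_i=\lambda_ir_i$. Writing $P:=f'''/(f'')^2$ and $Q:=g'''/(g'')^2$ (legitimate since $f''g''\neq0$), substituting $r_1=x$, $r_2=y$ and $r_3=f(u)+g(v)$ into (4.6) should yield
\[
\bigtriangleup^{II}r_1=\frac{dP-bQ}{2(ad-bc)},\qquad
\bigtriangleup^{II}r_2=\frac{-cP+aQ}{2(ad-bc)},\qquad
\bigtriangleup^{II}r_3=\tfrac12\!\left(f'P+g'Q\right)-2.
\]
The delicate computation here is $\bigtriangleup^{II}r_3$: using $-b\phi_x+a\phi_y=(ad-bc)g'$ and $d\phi_x-c\phi_y=(ad-bc)f'$ together with the identities $2ab\phi_{xy}-b^2\phi_{xx}-a^2\phi_{yy}=-(ad-bc)^2g''$ and $2cd\phi_{xy}-d^2\phi_{xx}-c^2\phi_{yy}=-(ad-bc)^2f''$ for $\phi=r_3$, the entire second line of (4.6) should collapse to the constant $-2$. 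I expect this collapse and the accompanying $(ad-bc)$ cancellations to be the most error-prone bookkeeping of the argument.

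Next I would exploit the first two conditions. Inverting (3.1) gives $x=(du-bv)/(ad-bc)$ and $y=(-cu+av)/(ad-bc)$, so $\bigtriangleup^{II}r_1=\lambda_1x$ and $\bigtriangleup^{II}r_2=\lambda_2y$ become the linear system $dP-bQ=2\lambda_1(du-bv)$, $-cP+aQ=2\lambda_2(-cu+av)$. Its coefficient matrix has determinant $ad-bc\neq0$, so I can solve for $P$ and $Q$ explicitly; the solution writes $P$ as a linear form in $u$ and $v$ whose $v$-coefficient is proportional to $ab(\lambda_1-\lambda_2)$, and dually for $Q$. Since $P$ is a function of $u$ alone and $Q$ of $v$ alone, these cross terms must vanish, giving $ab(\lambda_1-\lambda_2)=cd(\lambda_1-\lambda_2)=0$ and, after simplification, $P=2\lambda_1u$ and $Q=2\lambda_2v$ (the labels $\lambda_1,\lambda_2$ possibly interchanged). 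Two regimes follow: either $\lambda_1=\lambda_2$ with $a,b,c,d$ unconstrained, or $\lambda_1\neq\lambda_2$, forcing $ab=cd=0$ and hence, by $ad-bc\neq0$, either $b=c=0$ or $a=d=0$.

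I would then feed $P=2\lambda_1u$ into both the relation it encodes and the third condition. From $\bigtriangleup^{II}r_3=\lambda_3(f+g)$ and separation of variables one obtains $2\lambda_1uf'-2\lambda_3f=p$ and $2\lambda_2vg'-2\lambda_3g=q$ with $p+q=4$. Differentiating the first and combining with $f'''/(f'')^2=2\lambda_1u$ should force $f''\propto u^{-2}$, and matching this against the integrated form of $P$ pins down $\lambda_3=0$, $p=2$ and $f(u)=\tfrac1{\lambda_1}\ln u+\text{const}$; symmetrically $g(v)=\tfrac1{\lambda_2}\ln v+\text{const}$. A short separate check shows that $\lambda_1=0$ (or $\lambda_2=0$) makes the third condition inconsistent, so $\lambda_1,\lambda_2\neq0$. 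This coupled ODE analysis is the analytic heart of the proof, and the real work lies in verifying that no competing integration constant---a surviving linear or quadratic term, or a translated argument $\ln(u+\text{const})$---can coexist with both relations simultaneously.

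Finally I would normalize by a motion of $\mathbb{I}^3$. The additive constants of $f,g$ and the admissible shifts of the arguments $u,v$ are absorbed by the translations in the six-parameter motion group. When $\lambda_1\neq\lambda_2$ the reduction $b=c=0$ (resp.\ $a=d=0$) makes $u$ a multiple of $x$ and $v$ a multiple of $y$, and pulling the multiplicative constants out of the logarithms into the additive constant yields $z=\ln\!\big(x^{1/\lambda_1}y^{1/\lambda_2}\big)+c_1$, which is (i). When $\lambda_1=\lambda_2=\lambda$ the affine parameters remain general and the identical computation gives $z=\ln\!\big((uv)^{1/\lambda}\big)+c_1$, which is (ii). This exhausts the cases and completes the classification.
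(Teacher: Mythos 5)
Your proposal is correct and follows essentially the same route as the paper: it reduces $\bigtriangleup^{II}r_i=\lambda_i r_i$ to the same ODE system in $P=f'''/(f'')^2$ and $Q=g'''/(g'')^2$ (the paper's (4.7)--(4.9), including the collapse of the second-order block to the constant $-2$), forces $P=2\lambda_1 u$, $Q=2\lambda_2 v$ with either $\lambda_1=\lambda_2$ or $b=c=0$ (resp.\ $a=d=0$), and then separates variables to get the logarithmic solutions with $\lambda_3=0$. The only cosmetic differences are that you obtain the case dichotomy by inverting the linear system in $P,Q$ rather than by the paper's a priori split on how many of $a,b,c,d$ vanish, and you derive $\lambda_3=0$ by differentiating the first-order relation instead of integrating and substituting back; both variants check out.
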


\begin{proof} Let us assume\ that $\bigtriangleup ^{II}r_{i}=\lambda _{i}r_{i}$, $%
\lambda _{i}\in 
\mathbb{R}
.$ Then, from $\left( 4.1\right) $ and $\left( 4.6\right) ,$ we state the
following system %
\begin{equation}
d\frac{f^{\prime \prime \prime }}{\left( f^{\prime \prime }\right) ^{2}}-b%
\frac{g^{\prime \prime \prime }}{\left( g^{\prime \prime }\right) ^{2}}%
=2\left( ad-bc\right) \lambda _{1}x,  \tag{4.7}
\end{equation}%
\begin{equation}
-c\frac{f^{\prime \prime \prime }}{\left( f^{\prime \prime }\right) ^{2}}+a%
\frac{g^{\prime \prime \prime }}{\left( g^{\prime \prime }\right) ^{2}}%
=2\left( ad-bc\right) \lambda _{2}y,  \tag{4.8}
\end{equation}%
\begin{equation}
\frac{f^{\prime \prime \prime }f^{\prime }}{\left( f^{\prime \prime }\right)
^{2}}+\frac{g^{\prime \prime \prime }g^{\prime }}{\left( g^{\prime \prime
}\right) ^{2}}-4=2\lambda _{3}\left(
f+g\right) .  \tag{4.9}
\end{equation}%
In order to solve above system we have to distinguish two cases depending on the constants $a,b,c,d$ for $ad-bc \neq 0$ . 

\textbf{Case (a).} Two of $a,b,c,d$ are zero. Without loss of generality we may assume that $b=c=0$ and $a=d=1$. Then the equations (4.7) and (4.8) reduce to
\begin{equation}
\frac{f^{\prime \prime \prime }}{\left( f^{\prime \prime }\right) ^{2}}%
=2\lambda _{1}x  \tag{4.10}
\end{equation}%
and%
\begin{equation}
\frac{g^{\prime \prime \prime }}{\left( g^{\prime \prime }\right) ^{2}}%
=2\lambda _{2}y.  \tag{4.11}
\end{equation}
If $\lambda_{1}=\lambda_{2}=0$ then we obtain a contradiction from (4.9) since $f,g$ are non-constant functions. Thereby we need to consider the remaining cases:

\textbf{Case (a.1).} $\lambda _{1}=0,$ i.e. $f^{\prime \prime \prime }=0.$
Then substituting $\left( 4.10\right) $ and $\left( 4.11\right) $ into $%
\left( 4.9\right) $ implies $\lambda _{3}=0$ and 
\[
g\left( y\right) =\frac{2}{\lambda _{2}}\ln y+c_{1},\text{ }c_{1}\in 
\mathbb{R}
.
\]%
Substituting it in (4.11) gives a contradiction.

\textbf{Case (a.2).} $\lambda _{2}=0,$ i.e. $g^{\prime \prime \prime }=0.$
Hence we can similarly obtain $\lambda _{3}=0$ and
\[
f\left( x\right) =\frac{2}{\lambda _{1}}\ln x+c_{1},\text{ }c_{1}\in 
\mathbb{R}
,
\]%
which gives a contradiction by considering it into (4.10).

\textbf{Case (a.3). }$\lambda _{1}\lambda _{2}\neq 0.$ By substituting $\left( 4.10\right) $ and $\left(
4.11\right) $ into $\left( 4.9\right) $ we deduce
\begin{equation}
\lambda_{1} xf^{\prime }+\lambda_{2} yg^{\prime }-2=\lambda _{3}\left( f+g\right) .  \tag{4.12}
\end{equation}%

\textbf{Case (a.3.1).} If $\lambda _{3}=0$, then (4.12) reduces to
\begin{equation}
\lambda_{1} xf^{\prime }+\lambda_{2} yg^{\prime }=2.  \tag{4.13}
\end{equation}%
By solving $\left( 4.13\right) $ we find%
\[
f\left( x\right) =\frac{\xi }{\lambda_{1} }\ln x+c_{1}\text{ and }g\left(
v\right) =\frac{2-\xi }{\lambda _{2}}\ln y+c_{2},\text{ }c_{1},c_{2}\in 
\mathbb{R}
,\text{ }\xi \in 
\mathbb{R}
^{\ast }. \tag{4.14}
\]
Substituting (4.14) into (4.10) and (4.11) yields $\xi =1$. 
This proves the first statement of the theorem.

\textbf{Case (a.3.2).} If $\lambda _{3}\neq 0$ in (4.12) then we can rewrite it as
\begin{equation}
\lambda _{1} xf^{\prime }-\lambda _{3}f-2=c_{1}=-\lambda _{2} yg^{\prime }+\lambda
_{3}g,\text{ }c_{1}\in 
\mathbb{R}
.  \tag{4.15}
\end{equation}%
After solving $\left( 4.15\right) ,$ we conclude%
\[
f\left( x\right) =-\frac{2+c_{1}}{\lambda _{3}}+c_{2}x^{\frac{\lambda _{3}}{%
\lambda _{}1}}  \tag{4.16}
\]%
and%
\[
g\left( y\right) =\frac{c_{1}}{\lambda _{3}}+c_{3}y^{\frac{\lambda _{3}}{%
\lambda _{2}}}, \text{ } c_{2},c_{3}\in \mathbb{R}.  \tag{4.17}
\]
By considering (4.16) and (4.17) into (4.10) and (4.11), respectively, we conclude $\lambda _{3}=0$, which implies that this case is not possible.

\textbf{Case (b).} At most one of $a,b,c,d$ is zero. Suppose that $\lambda _{1}=0$ in (4.7). It follows
from $\left( 4.7\right) $ that%
\begin{equation}
\frac{f^{\prime \prime \prime }}{\left( f^{\prime \prime }\right) ^{2}}=%
\frac{c_{1}}{d}\text{ and }\frac{g^{\prime \prime \prime }}{\left( g^{\prime
\prime }\right) ^{2}}=\frac{c_{1}}{b},\text{ }c_{1}\in 
\mathbb{R}
,  \tag{4.18}
\end{equation}%
where we may assume that $b\neq 0 \neq d$ since at most one of $a,b,c,d$ can vanish. If $c_{1}=0$ then we derive a contradiction from $\left( 4.9\right) $ since $%
f^{\prime \prime }g^{\prime \prime }\neq 0.$ Considering $\left( 4.18\right) 
$ into $\left( 4.8\right) $ yields $\frac{c_{1}}{bd}=2\lambda _{2}y,$ which
is no possible since $y$ is an independent variable. This implies that $\lambda _{1}$ is not zero and it can be similarly shown that $\lambda _{2}$ is not zero. Hence from $\left( 4.7\right) $ and $\left(
4.8\right) $ we can write%
\begin{equation}
\frac{f^{\prime \prime \prime }}{\left( f^{\prime \prime }\right) ^{2}}%
=2\left( \lambda _{1}ax+\lambda _{2}by\right)   \tag{4.19}
\end{equation}%
and%
\begin{equation}
\frac{g^{\prime \prime \prime }}{\left( g^{\prime \prime }\right) ^{2}}%
=2\left( \lambda _{1}cx+\lambda _{2}dy\right) .  \tag{4.20}
\end{equation}%
Compatibility condition in $\left( 4.19\right) 
$ or $\left( 4.20\right) $ gives $\lambda _{1}=\lambda _{2}.$ Put $\lambda
_{1}=\lambda _{2}=\lambda .$ By substituting $\left( 4.19\right) $ and $\left(
4.20\right) $ into $\left( 4.9\right) $ we deduce
\begin{equation}
\lambda uf^{\prime }+\lambda vg^{\prime }-2=\lambda _{3}\left( f+g\right),  \tag{4.21}
\end{equation}%
where $\left( u,v\right) $ is the affine parameter coordinates given by $%
\left( 3.1\right) $.

\textbf{Case (b.1).} If $\lambda _{3}=0$, then (4.21) reduces to
\begin{equation}
\lambda uf^{\prime }+\lambda vg^{\prime }=2.  \tag{4.22}
\end{equation}%
By solving $\left( 4.22\right) $ we find%
\[
f\left( u\right) =\frac{\xi }{\lambda }\ln u+c_{1}\text{ and }g\left(
v\right) =\frac{2-\xi }{\lambda }\ln v+c_{2},\text{ }c_{1},c_{2}\in 
\mathbb{R}
,\text{ }\xi \in 
\mathbb{R}
^{\ast }. \tag{4.23}
\]
Substituting (4.23) into (4.19) and (4.20) yields $\xi =1$. 
This proves the second statement of the theorem.

\textbf{Case (b.2).} If $\lambda _{3}\neq 0$ in (4.11) then we can rewrite it as
\begin{equation}
\lambda uf^{\prime }-\lambda _{3}f-2=c_{1}=-\lambda vg^{\prime }+\lambda
_{3}g,\text{ }c_{1}\in 
\mathbb{R}
.  \tag{4.24}
\end{equation}%
After solving $\left( 4.24\right) ,$ we deduce%
\[
f\left( u\right) =-\frac{2+c_{1}}{\lambda _{3}}+c_{2}u^{\frac{\lambda _{3}}{%
\lambda }}  \tag{4.25}
\]%
and%
\[
g\left( v\right) =\frac{c_{1}}{\lambda _{3}}+c_{3}v^{\frac{\lambda _{3}}{%
\lambda }}, \text{ } c_{2},c_{3}\in \mathbb{R}.  \tag{4.26}
\]
Considering (4.25) and (4.26) into (4.19) and (4.20), respectively, we find $\lambda _{3}=0$, however this is a contradiction.

\end{proof}

\begin{example} Given the affine translation surface of Type 1 in\textbf{\ }$\mathbb{I}^{3}$ as
follows%
\[
z\left( x,y\right) =\ln\left( 2x+y\right) +\ln \left(  x-y\right), (u,v) \in [3,5] \times [1,2] %
.
\]%
Then it holds $\bigtriangleup ^{II}r_{i}=\lambda _{i}r_{i}$ for $\left(
\lambda _{1},\lambda _{2},\lambda _{3}\right) =\left( 1,1,0\right) $ and we
plot it as in Fig. 3.%
\end{example}
\begin{figure}[ht]
\begin{center}
\includegraphics[scale=0.25]{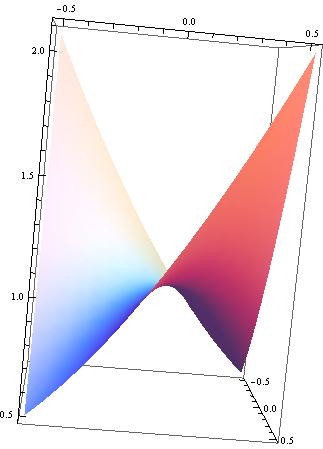}
\caption{A (linear) Weingarten affine translation surface of Type 1.}
\end{center}
\end{figure}
\begin{figure}[ht]
\begin{center}
\includegraphics[scale=0.25]{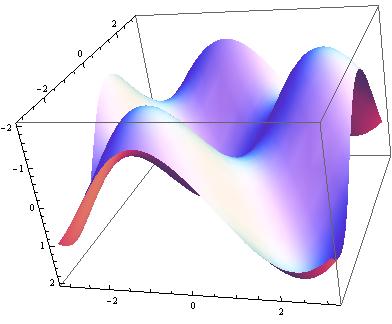}
\caption{An affine translation surface of Type 1 with $\bigtriangleup^{I}r_{i}=\lambda _{i}r_{i},$ $\left(\lambda _{1},\lambda_{2},\lambda_{3}\right)=\left(0,0,2\right)$.}
\end{center}
\end{figure}
\begin{figure}[ht]
\begin{center}
\includegraphics[scale=0.25]{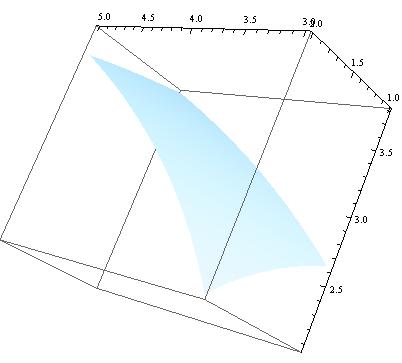}
\caption{An affine translation surface of Type 1 with $\bigtriangleup ^{II}r_{i}=\lambda
_{i}r_{i},$ $\left( \lambda _{1},\lambda _{2},\lambda _{3}\right) =\left(
1,1,0\right) .$}
\end{center}
\end{figure}

\end{document}